\DeclareMathOperator{\Aut}{Aut}
\DeclareMathOperator{\End}{End}
\DeclareMathOperator{\Det}{det}
\DeclareMathOperator{\Deg}{deg}
\DeclareMathOperator{\Sp}{Sp}
\DeclareMathOperator{\Ht}{ht}
\DeclareMathOperator{\J}{J}
\DeclareMathOperator{\GL}{GL}
\newcommand{\TAut}{\operatorname{TAut}}
\newtheorem{thm}{Theorem}[section]
\newtheorem{lem}[thm]{Lemma}
\newtheorem{prop}[thm]{Proposition}
\newtheorem{conj}[thm]{Conjecture}
\newtheorem{Def}[thm]{Definition}
\begin{document}
\renewcommand{\thefootnote}{\fnsymbol{footnote}}
\footnotetext{\emph{2000 Mathematics Subject Classification:} 14R10, 14R15}
\footnotetext{\emph{Key words:} Polynomial symplectomorphis, automorphisms of Weyl algebra, quantization, B-KK Conjecture.}
\renewcommand{\thefootnote}{\arabic{footnote}}
\fontsize{12}{12pt}\selectfont
\title{\bf Automorphisms of Weyl Algebra and a Conjecture of Kontsevich}
\renewcommand\Affilfont{\itshape\small}
\author[1,3]{Alexei Kanel-Belov\thanks{kanel@mccme.ru}}
\author[2]{Andrey Elishev\thanks{elishev@phystech.edu}}
\author[1]{Jie-Tai Yu\thanks{jietaiyu@szu.edu.cn}}

\affil[1]{College of Mathematics and Statistics, Shenzhen University, Shenzhen, 518061, China}
\affil[2]{Laboratory of Advanced Combinatorics and Network Applications, Moscow Institute of Physics and Technology, Dolgoprudny, Moscow Region, 141700, Russia}
\affil[3]{Mathematics Department, Bar-Ilan University, Ramat-Gan, 52900, Israel}

\date{}

\maketitle

\renewcommand{\abstractname}{Abstract}
\begin{abstract}
We outline the proof of a conjecture of Kontsevich on the isomorphism between the group of polynomial symplectomorphisms in $2n$ variables and the group of automorphisms of the $n$-th Weyl algebra over complex numbers. Our proof uses lifting of polynomial symplectomorphisms to Weyl algebra automorphisms by means of approximation by tame symplectomorphisms and gauging of the lifted morphism. Approximation by tame symplectomorphisms is the symplectic version of the well-known theorem of D. Anick and is a result of our prior work.

\end{abstract}

\section{Introduction}

The objective of this short note is to explain a method of attack on a conjecture formulated by one of us together with Kontsevich in \cite{BKK1}, which is henceforth referred to as the Kontsevich conjecture. The conjecture, in its most straightforward form, states that the automorphism group of the $n$-th Weyl algebra over an algebraically closed field $\mathbb{K}$ of characteristic zero is isomorphic to the group of polynomial symplectomorphisms -- that is, polynomial automorphisms preserving the symplectic structure -- of the affine space $\mathbb{A}^{2n}_{\mathbb{K}}$. The statement can be reformulated as
\begin{conj}
  $\Aut W_n(\mathbb{K})\simeq \Aut P_n(\mathbb{K})$
\end{conj}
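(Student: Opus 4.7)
The plan is to construct a map $\Phi : \Aut P_n(\mathbb{K}) \to \Aut W_n(\mathbb{K})$ that inverts the canonical semiclassical projection $\psi : \Aut W_n(\mathbb{K}) \to \Aut P_n(\mathbb{K})$. The projection $\psi$ is already known to be well-defined and injective: any automorphism of $W_n$ in characteristic zero preserves the Bernstein filtration (work of Bavula, Belov--Kontsevich, Tsuchimoto), so it induces an automorphism of $\operatorname{gr} W_n \cong P_n$, which is automatically Poisson (hence symplectic) because the Poisson bracket on $\operatorname{gr} W_n$ is the leading symbol of the commutator. The substance of the conjecture is therefore the construction of the inverse $\Phi$.

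First I would lift tame symplectomorphisms. The tame subgroup $\TAut P_n \subset \Aut P_n$ is generated by linear symplectic transformations together with elementary transvections — time-one flows of Hamiltonians depending only on a Lagrangian half of the variables. Each such generator admits a canonical Weyl algebra lift: a linear $Sp$-transformation acts on the generators of $W_n$ by the same formula, precisely because the commutation relations are preserved; and a transvection with Hamiltonian $H(q_1,\dots,q_n)$ (say) lifts without ordering ambiguity since $H$ lies in a commutative subalgebra, so no Moyal corrections appear. The remaining task on the tame subgroup is to check that the defining Steinberg-type relations among the generators hold up to a scalar / inner gauge and to fix this gauge canonically, for instance by normalizing the action on the augmentation character of $W_n$.

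For an arbitrary $\phi \in \Aut P_n$, I would then invoke the symplectic analog of Anick's approximation theorem (our earlier result) to produce a sequence $\phi_k \in \TAut P_n$ converging to $\phi$ in the topology given by order of vanishing at the origin. The corresponding uniformly gauged lifts $\tilde{\phi}_k \in \Aut W_n$ should form a Cauchy sequence in the topology on $\Aut W_n$ induced by the Bernstein filtration; the limit $\tilde{\phi}$ then defines $\Phi(\phi)$. That $\Phi$ is well-defined, a homomorphism, and two-sided inverse to $\psi$ reduces to the statement that two lifts of the same symplectomorphism differ by an element of $\Ker \psi$, which is trivial once the gauge normalization is imposed.

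The main obstacle is the control of quantum corrections in the limit. Concretely, one must establish a quantitative estimate relating the classical order $\ord(\phi - \phi_k)$ to the quantum order $\ord(\tilde{\phi}_\ell - \tilde{\phi}_k)$ of the lifts, uniformly along the approximating sequence, and one must perform the gauge fixing coherently so that no ambiguity accumulates. Both points rest on a careful analysis of how tame lifts interact with the Bernstein filtration and on ruling out nontrivial elements of $\Ker \psi$ after the augmentation has been fixed. I expect this quantitative passage from the tame lift to the general lift — rather than the construction on the tame subgroup itself — to be the technically decisive step.
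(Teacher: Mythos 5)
Your overall strategy --- lift the tame generators canonically, approximate an arbitrary symplectomorphism by tame ones via the symplectic version of Anick's theorem, and pass to the limit --- is the same as the paper's. However, two points are genuinely off. First, the direct map $\Aut W_n(\mathbb{K})\rightarrow \Aut P_n(\mathbb{K})$ is not obtained from the Bernstein filtration: automorphisms of $W_n$ do not preserve that filtration (already $x\mapsto x$, $\partial\mapsto \partial+x^2$ raises the degree of a generator), so there is no induced automorphism of the associated graded algebra. The homomorphism actually used is the Belov--Kontsevich one: embed $W_n(\mathbb{C})$ into an ultraproduct of Weyl algebras over algebraically closed fields of characteristic $p$, where the center is the polynomial algebra in the $p$-th powers of the generators, and restrict the automorphism to that center; injectivity and the isomorphism on tame subgroups are properties of this specific construction, not of a symbol map.

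Second, and more seriously, your passage to the limit is exactly where the real difficulty sits, and the mechanism you propose does not resolve it. The degrees of the tame approximants $\sigma_k$ grow without bound, so the lifted automorphisms converge only to an automorphism of a completed object: a tuple of power series in $\hbar$ whose coefficients are in general power series in the generators. No Cauchy-type estimate in a filtration topology can force this limit to be polynomial, because $\Aut W_n$ is simply not closed in that completion; the limit genuinely fails to be a Weyl algebra automorphism as it stands. The decisive step in the paper, absent from your proposal, is the \emph{gauging} of the lifted limit: working componentwise in the ultraproduct, where the characteristic-$p$ Weyl algebra is Azumaya (a matrix algebra over its center, so that the images $\Psi_l$ become rational in $\hbar$), one conjugates by elements $1+\hbar Q_l$ with $Q_l$ rational in $\hbar$, chosen so as to stabilize the $\hbar$-independent part of the center $\mathbb{F}_p[x_1^p,\ldots,y_n^p]$ and to kill the higher-order terms in $\hbar$ order by order (the leading correction being the commutator $[\Psi_l,Q_l]\hbar$, which acts by differentiation in $Q_l$). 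Only after this gauge correction does one obtain polynomial entries and hence the embedding $W_n(\hbar,\sigma)\hookrightarrow W_n(\hbar)$ equivalent to the conjecture. Your ``quantitative estimate relating classical and quantum orders'' together with a normalization of the augmentation would not produce this; without the conjugation step the construction stalls at an automorphism of the power series completion, which (as the paper notes in connection with the work of Myung and Oh) is a strictly weaker statement.
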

-- where the $n$-th Weyl algebra $W_{n}(\mathbb{K})$ over $\mathbb{K}$ is by definition the quotient of the free associative algebra
\begin{equation*}
\mathbb{K}\langle a_1,\ldots,a_n,b_1,\ldots,b_n\rangle
\end{equation*}
by the two-sided ideal generated by elements
\begin{equation*}
b_ia_j-a_jb_i-\delta_{ij},\;\;a_ia_j-a_ja_i,\;\;b_ib_j-b_jb_i,
\end{equation*}
with $1\leq i,j\leq n$, while the algebra $P_n(\mathbb{K})$ is the commutative polynomial algebra $\mathbb{K}[x_1,\ldots,x_{2n}]$ carrying an additional structure of the Poisson algebra via the standard Poisson bracket -- that is, a bilinear map $$\lbrace \;,\;\rbrace:\mathbb{K}[x_1,\ldots,x_{2n}]\otimes \mathbb{K}[x_1,\ldots,x_{2n}]\rightarrow \mathbb{K}[x_1,\ldots,x_{2n}]$$ that turns $\mathbb{K}[x_1,\ldots,x_{2n}]$ into a Lie algebra and acts as a derivation with respect to polynomial multiplication (therefore, automorphisms of such algebras are required to preserve the additional structure). The standard Poisson bracket is defined as
\begin{equation*}
\lbrace x_i,x_j\rbrace = \delta_{i,n+j}-\delta_{i+n,j},
\end{equation*}
with $\delta_{ij}$ meaning the Kronecker delta.

Several generalizations of Kontsevich conjecture are known; the most obvious one is obtained by replacing the algebraically closed ground field $\mathbb{K}$ with the field $\mathbb{Q}$ of rational numbers. Other generalizations are somewhat more elaborate and are discussed at length in \cite{BKK1}.

The setting in which Conjecture 1 naturally arises is that of deformation quantization of polynomial algebra. The commutative Poisson algebra $P_n$ serves as the classical counterpart to the algebra $W_n$ of polynomial differential operators. It is therefore sensible to ask whether the quantization preserves the automorphism group. One then, in order to answer this question, tries to construct either a direct homomorphism
$$
\Aut W_n(\mathbb{K})\rightarrow \Aut P_n(\mathbb{K})
$$
or an inverse
$$
\Aut P_n(\mathbb{K})\rightarrow \Aut W_n(\mathbb{K}).
$$

Known ways of accomplishing that goal are somewhat involved. We briefly comment on the relatively accessible case of \textbf{(direct) group homomorphism}
$$
\Aut W_n(\mathbb{K})\rightarrow \Aut P_n(\mathbb{K})
$$
with $\mathbb{K}$ being algebraically closed \footnote{One makes a straightforward observation that the Kontsevich conjecture, along with objects accompanying it, are statements which can be formulated by means of first-order logic; therefore, in the case of algebraically closed ground field $\mathbb{K}$ of characteristic zero, one may well work with complex numbers, $\mathbb{K}=\mathbb{C}$, in accordance with the Lefschetz principle.}.

The idea is to realize the Weyl algebra $W_n$ as a subalgebra in an algebra whose center is large enough to contain the polynomial algebra $\mathbb{C}[x_1,\ldots,x_{2n}]$, and then, starting with an automorphism $\varphi\in\Aut W_n(\mathbb{C})$, restrict it to the polynomial algebra. At this point, reduction to positive characteristic starts playing an important role. Namely, one represents the ground field as a reduced direct product of algebraically closed fields of characteristic $p$,
$$
\mathbb{C}\simeq \prod_{p}\; '\; \mathbb{F}_p
$$
where $p$ runs over all prime numbers, and reduction is taken with respect to a free ultrafilter $\mathcal{U}$ on the index set of natural numbers. In other words, $\prod'_{p} \mathbb{F}_p$ is the quotient of the direct product $\prod_{p} \mathbb{F}_p$ by the maximal \footnote{As well as minimal -- recall that the product of fields is always von Neumann regular.} ideal generated by $\mathcal{U}$ as in \cite{FMS}. Such a procedure is sometimes referred to in literature as reduction modulo infinite prime (the infinite prime being the sequence of prime numbers that indexes the direct product -- in this case the standard prime number sequence; one could as well take for such a sequence any sequence of primes which is not equivalent under the chosen ultrafilter to a stationary sequence). The point of this construction consists in the fact that in positive characteristic the Weyl algebra $W_n$ has a huge center isomorphic to the polynomial algebra $\mathbb{F}_p[x_1^p,\ldots,x_n^p,d_1^p,\ldots,d_n^p]$ (with $x_i,\; d_j$ being the generators), while nothing of the sort is the case of characteristic zero. Therefore, the reduced product of Weyl algebras will just be the larger algebra one is looking for -- one whose center contains a copy of the polynomial algebra. A significant detail is given by the fact that the Weyl algebra commutator naturally induces a Poisson structure on the polynomial subalgebra of the larger center, thus making the resulting automorphism symplectic.

Thus one constructs a homomorphism $\Aut W_n(\mathbb{C})\rightarrow \Aut P_n(\mathbb{C})$ which is a candidate for the simplest version of the Kontsevich conjecture. For the sake of brevity we have left out the details and refer the interested reader to the works \cite{BKK1, BKK2} and \cite{K-BE2}. This homomorphism is injective, and induces an isomorphism of subgroups of tame automorphisms (the definition of tame automorphism is given below). An identical procedure produces a monoid homomorphism between the sets of endomorphisms of $W_n$ and $P_n$; this fact has been used to establish a stable equivalence between the Dixmier conjecture \cite{Dix} (any endomorphism of $W_n$ is invertible -- open for all $n$ as of time of writing) and the Jacobian conjecture, cf. \cite{BKK2, Tsu1, Tsu2}.

The direct homomorphism can be made explicitly independent of the prime number sequence by means of a non-standard (inverse) Frobenius twist of coefficients. It is, however, insufficient to guarantee its independence of the choice of infinite prime and the ultrafilter completely, for integer combinations of coefficients (coming from applying Weyl algebra commutation relations) could differ for different such choices. The question of independence of infinite prime is therefore non-trivial. In our prior work \cite{K-BE2} we have provided a proof of independence, which however relies on the homomorphism in question to be one-to-one.

\medskip

The present paper focuses on the reverse approach. Starting with a polynomial symplectomorphism, we construct an automorphism of the power series completion of the Weyl algebra and then argue that the power series that are images of the generators of $W_n$ must be polynomials. The procedure is referred to as the \textbf{lifting} throughout the text. Central to it is the fact that the subgroup of tame symplectomorphisms $\TAut P_n$ is dense (with respect to power series topology) in $\Aut P_n$ -- a fact we have obtained recently \cite{KGE}. Approximation of polynomial automorphisms by tame automorphisms was developed by Anick \cite{An} and has already become a classical result in algebraic geometry. Our work serves, in a way, as a development of Anick's results to the symplectic case.

\section{Tame symplectomorphisms, topology, and approximation}

This section reviews the background and results on approximation by tame automorphisms necessary in our context. Most of the theory, as well as detailed proofs, can be found in the classical work of Anick \cite{An}. Tame symplectomorphism approximation is the main result of our recent work with S. Grigoriev and W. Zhang \cite{KGE}.

Let $A_N=\mathbb{K}[x_1,\ldots, x_{N}]$ be the commutative polynomial algebra over a field $\mathbb{K}$, and let $\varphi$ be an algebra endomorphism.

Any such endomorphism can be identified with the ordered set
\begin{equation*}
(\varphi(x_1),\;\varphi(x_2),\;\ldots,\; \varphi(x_N))
\end{equation*}
of images of generators of the algebra. The polynomials $\varphi(x_i)$ may be represented as sums of their respective homogeneous components; this may be expressed formally as
\begin{equation*}
\varphi = \varphi_0+\varphi_1+\cdots,
\end{equation*}
where $\varphi_k$ is a string of length $N$ whose entries are homogeneous polynomials of total degree $k$.\footnote{We set $\Deg x_i=1$.}
\begin{Def}
The height $\Ht(\varphi)$ of an endomorphism $\varphi$ is defined as
\begin{equation*}
\Ht(\varphi)=\inf\lbrace k\;|\;\varphi_k\neq 0\rbrace,\;\;\Ht(0)=\infty.
\end{equation*}
\end{Def}

This is not to be confused with \textbf{degree} of endomorphism, which is defined as $$\Deg(\varphi)=\sup\lbrace k\;|\;\varphi_k\neq 0\rbrace.$$ The height $\Ht(f)$ of a polynomial $f$ is defined quite similarly to be the minimal number $k$ such that the homogeneous component $f_k$ is non-zero. Evidently, for an endomorphism $\varphi=(\varphi(x_1),\;\ldots,\;\varphi(x_N))$ one has
\begin{equation*}
\Ht(\varphi)=\inf\lbrace \Ht(\varphi(x_i))\;|\;1\leq i\leq N\rbrace.
\end{equation*}

The function
\begin{equation*}
d(\varphi,\psi)=\exp(-\Ht(\varphi-\psi))
\end{equation*}
is a metric on the set $\End \mathbb{K}[x_1,\ldots,x_N]$; the corresponding topology will be referred to as the power series topology.

\medskip

We turn to automorphisms and define the tame subgroup.
\begin{Def}
We say that an automorphism $\varphi \in \Aut A_N$ is \textbf{elementary} if it is given either by a linear change of generators
$$
(x_1,\;\ldots,\;x_N)\mapsto (x_1,\;\ldots,\;x_N)A,\;\;A\in\GL(N,\mathbb{K})
$$
or by a transvection -- a change of variables of the form
$$
(x_1,\;\ldots,\;x_N)\mapsto (x_1,\;\ldots,\;x_k+f(x_1,\ldots,x_{k-1},x_{k+1},\ldots,x_N),\;\ldots,\;x_N)
$$
(that is, all generators are kept fixed with the exception of $x_k$, to which a polynomial free of $x_k$ is added).
\end{Def}

\begin{Def}
The subgroup $\TAut A_N$ of tame automorphisms is the subgroup of $\Aut A_N$ generated by elementary automorphisms defined as above.
\end{Def}

Whenever $N=2n$ is even, the polynomial algebra can be made into $P_n$ by partitioning the set of its generators into two even subsets, $\lbrace x_1,\ldots, x_n\rbrace$ and $\lbrace p_1,\ldots, p_n\rbrace$, and introducing the corresponding Poisson bracket. Under the identification $$\mathbb{K}[x_1,\ldots,x_{n},p_1,\ldots,p_n]\simeq \mathcal{O}(\mathbb{A}^{2n}_{\mathbb{K}})$$ the generators $x_i$, $p_j$ become the Darboux coordinate functions for the standard symplectic form $\omega = \sum_{i} dx_i\wedge dp_i$. The group $\Aut P_n$ is then the subgroup of $\Aut \mathbb{K}[x_1,\ldots,x_{n},p_1,\ldots,p_n]$ of automorphisms which preserve the symplectic (or Poisson) structure. Its intersection with $\TAut \mathbb{K}[x_1,\ldots,x_{n},p_1,\ldots,p_n]$ is the \textbf{subgroup} $\TAut P_n$ \textbf{of tame symplectomorphisms}.

Automorphisms that are not tame are called \textbf{wild}. Wild automorphisms exist -- the most well-known example being due to Nagata \cite{Shes2, Shes3}:
$$
\varphi_N:\mathbb{K}[x,y,z]\rightarrow \mathbb{K}[x,y,z],
$$
\begin{equation*}
\varphi_N:(x,y,z)\mapsto(x+(x^2-yz)x,\;y+2(x^2-yz)x+(x^2-yz)^2z,\;z).
\end{equation*}

In dimension two, all automorphisms are tame -- a fact that allows for an explicit description of $\Aut P_1$ (cf. \cite{Jung, VdK}) and $\Aut W_1$ and, consequently, positive resolution of Kontsevich conjecture in this case. The latter is due to Makar-Limanov \cite{ML1, ML2}. It turns out \cite{ML3} that the tameness of the planar case is not specific to the commutative polynomial algebra, but rather is a property of a broader class of objects.

It is not known whether in even dimensions all symplectomorphisms are tame; that fact, if it were to be established, would pave the way for a quick resolution of Kontsevich conjecture, for the direct homomorphism
$$
\Aut W_n(\mathbb{C})\rightarrow \Aut P_n(\mathbb{C})
$$
restricts to an isomorphism of tame subgroups.

\medskip

We proceed by formulating basic results on approximation by tame automorphisms.

\begin{lem}
An elementary linear automorphism is a symplectomorphism if and only if its matrix $A$ is symplectic, $A\in \Sp(2n, \mathbb{K})$. A transvection defined by a polynomial $f$ is a symplectomorphism if and only if $f$ is free of all generators of the type that has the generator to which $f$ is added. That is, if $f$ is added to $x_k$, then $f$ must be a function of $p_1,\ldots, p_n$ only for it to be a symplectomorphism, and vice versa.
\end{lem}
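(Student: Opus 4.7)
The plan is to reduce the symplectic condition to a finite check on pairs of generators. Because the Poisson bracket on $P_n$ is a biderivation and the brackets of the generators themselves are scalars, an algebra automorphism $\varphi$ is a symplectomorphism if and only if $\{\varphi(x_i),\varphi(x_j)\}=\{x_i,x_j\}$ for all $i,j$. Collect the defining relations into the constant antisymmetric matrix $J$ with entries $J_{ij}=\{x_i,x_j\}$; this is the standard symplectic matrix in the Darboux basis.

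For the linear elementary automorphism $\varphi_A$ sending $x_i\mapsto\sum_k A_{ki}x_k$, I would simply unfold the bracket:
\[
\{\varphi_A(x_i),\varphi_A(x_j)\}=\sum_{k,l}A_{ki}A_{lj}\{x_k,x_l\}=(A^{T}JA)_{ij}.
\]
The symplectic condition $\{\varphi_A(x_i),\varphi_A(x_j)\}=J_{ij}$ then reads $A^{T}JA=J$, which is exactly the defining equation of $\Sp(2n,\mathbb{K})$. For the transvection $\varphi$ with $\varphi(x_k)=x_k+f$ and all other generators fixed, the brackets among images whose indices differ from $k$ are preserved tautologically, so only the pairs $\{\varphi(x_i),\varphi(x_k)\}$ with $i\neq k$ can fail. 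For these,
\[
\{\varphi(x_i),\varphi(x_k)\}=\{x_i,x_k\}+\{x_i,f\},
\]
so symplecticity is equivalent to $\{x_i,f\}=0$ for every $i\neq k$. Using the canonical relations, each such vanishing translates into a first-order partial differential equation on $f$; collecting them forces $f$ to be built only from generators whose Poisson pairings with the $x_i$, $i\neq k$, vanish -- that is, only from generators of the symplectic type dual to that of $x_k$, which is the restriction claimed in the lemma. The case in which $f$ is added to a momentum generator is symmetric and I would handle it by interchanging the roles of positions and momenta in the same computation.

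The proof is entirely a matter of unwinding the defining Poisson relations against the explicit form of the elementary automorphism, so no conceptual obstacle is expected. The only delicate point is careful bookkeeping of signs and indices in the formula $\{x_i,x_j\}=\delta_{i,n+j}-\delta_{i+n,j}$ together with the explicit pairing between positions and momenta, to make sure the derived system of conditions on $f$ is both necessary and sufficient rather than being under- or over-constrained.
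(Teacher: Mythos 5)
Your overall strategy is exactly the ``straightforward'' unwinding that the paper has in mind (its proof consists of that single word): since the bracket is a biderivation, symplecticity of an algebra automorphism reduces to checking it on pairs of generators, and your linear computation $\{\varphi_A(x_i),\varphi_A(x_j)\}=(A^{T}JA)_{ij}$, hence the condition $A^{T}JA=J$, settles the first half completely.

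The transvection half, however, breaks at the last step. From $\{\varphi(g),\varphi(x_k)\}=\{g,x_k\}$ for every generator $g\neq x_k$ you correctly obtain $\{g,f\}=0$ for all such $g$. But each of these conditions reads $\pm\,\partial f/\partial \bar g=0$, where $\bar g$ denotes the generator conjugate to $g$, and as $g$ runs over all generators other than $x_k$ the conjugates $\bar g$ run over all generators other than the one conjugate to $x_k$. So the system forces $f$ to be a polynomial in the single conjugate variable $p_k$ alone; the only generator whose brackets with every $g\neq x_k$ vanish is $p_k$, and your phrase ``generators of the symplectic type dual to that of $x_k$'' silently replaces $\{p_k\}$ by the whole family $\{p_1,\ldots,p_n\}$. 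For $n\geq 2$ these are genuinely different conditions: the transvection $x_1\mapsto x_1+p_2$ (all other generators fixed) satisfies the condition stated in the lemma, yet $\{x_1+p_2,\,x_2\}=-1\neq 0=\{x_1,x_2\}$, so it is not a symplectomorphism. Thus your closing claim that the derived restriction ``is the restriction claimed in the lemma'' is where the argument fails; carried out honestly, your computation proves the stronger equivalence ($f\in\mathbb{K}[p_k]$, a function of the variable conjugate to $x_k$) and shows that the ``if'' direction of the lemma as literally stated is false unless one either strengthens the hypothesis in this way or replaces single transvections by simultaneous Hamiltonian-type shifts $x_i\mapsto x_i+\partial F/\partial p_i$ with $F=F(p_1,\ldots,p_n)$. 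You should state and prove the corrected version rather than assert agreement with the printed statement.
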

\begin{proof}
Straightforward.
\end{proof}

We now formulate the basic results of approximation by tame automorphisms. Anick's theorem states that the subgroup $\TAut A_N$ is dense in $\Aut A_N$ in power series topology. The unit Jacobian requirement is not essential (indeed, any automorphism must have a constant Jacobian -- an easy exercise; forcing an automorphism to have unit Jacobian amounts then to a rescaling), yet convenient. One may, without loss of generality, develop approximation for automorphisms in the neighborhood of the identity automorphism -- that is, automorphisms which are identity modulo terms of certain height. In this framework, the unit Jacobian requirement becomes redundant.

\begin{thm}[Anick, \cite{An}]
Let $\varphi=(\varphi(x_1),\;\ldots,\;\varphi(x_N))$ be an automorphism of the polynomial algebra $A_N=\mathbb{K}[x_1,\ldots,x_N]$ over a field $\mathbb{K}$ of characteristic zero, such that its Jacobian
\begin{equation*}
\J(\varphi)=\Det \left[\frac{\partial \varphi(x_i)}{\partial x_j}\right]
\end{equation*}
is equal to $1$. Then there exists a sequence $\lbrace \psi_k\rbrace\subset \TAut \mathbb{K}[x_1,\ldots,x_N]$ of tame automorphisms which converges to $\varphi$ in power series topology.
\end{thm}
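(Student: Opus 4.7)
The plan is to exploit the metric structure on $\End A_N$ to produce the approximating sequence $\{\psi_k\}$ inductively, correcting one homogeneous degree at a time. First I would reduce to the case when $\varphi$ is close to the identity. Writing $\varphi=\varphi_1+\varphi_{\geq 2}$ for its linear part and higher-degree components, the condition $\J(\varphi)=1$ forces the constant term of the Jacobian, namely $\Det(\varphi_1)$, to equal $1$; hence $\varphi_1\in\SL(N,\mathbb{K})$ is an elementary automorphism. Replacing $\varphi$ by $\varphi_1^{-1}\circ\varphi$, I may assume $\varphi(x_i)=x_i+(\text{terms of height}\geq 2)$. Since $\TAut A_N$ is closed under composition and inversion, a tame approximation of this reduced map yields one for the original.

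The induction step runs as follows. Suppose a tame $\sigma_{k-1}\in\TAut A_N$ has already been built so that $\Ht(\sigma_{k-1}-\varphi)\geq k$; equivalently, $\rho_k=\sigma_{k-1}^{-1}\circ\varphi$ satisfies $\rho_k(x_i)=x_i+f_i+(\text{height}>k)$ for some homogeneous polynomials $f_i$ of degree $k\geq 2$. Every elementary automorphism has unit Jacobian, so does $\sigma_{k-1}$, and since $\J(\varphi)=1$ we obtain $\J(\rho_k)=1$. Expanding this relation to its leading order of vanishing yields the divergence condition
\[
\sum_{i=1}^{N}\frac{\partial f_i}{\partial x_i}=0.
\]
I then seek a tame $\tau_k$ of the same form $\tau_k(x_i)=x_i+f_i+(\text{height}>k)$, so that $\sigma_k:=\sigma_{k-1}\circ\tau_k$ satisfies $\Ht(\sigma_k-\varphi)\geq k+1$. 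The sequence $\{\sigma_k\}$ is then Cauchy in power series topology by construction, with $d(\sigma_k,\varphi)\leq e^{-k-1}$, and the limit is necessarily $\varphi$.

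The core algebraic content, and the main obstacle, is the construction of $\tau_k$: one must show that every divergence-free string $(f_1,\ldots,f_N)$ of homogeneous polynomials of degree $k$ is realized as the degree-$k$ piece of some product of transvections. The natural framework is the Lie algebra of polynomial vector fields $\sum g_i\partial_i$ of fixed degree, in which the symbols of elementary transvections are the free vector fields $g\partial_i$ with $g$ independent of $x_i$. Such vector fields are trivially divergence-free, and composition of transvections corresponds at leading order to summation of their symbols, so it suffices to show that these free symbols linearly span the divergence-free subspace in each degree. This reduces to an explicit combinatorial statement: every monomial $x^\alpha \partial_i$ with $\alpha_i\geq 1$ must be rewritten, modulo the divergence, as a combination of monomials $x^\beta\partial_j$ with $\beta_j=0$. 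The delicate point is carrying out this rewriting compatibly across all indices so that the total divergence correction cancels; characteristic zero enters essentially here, since one must divide by integer exponents. Granting this lemma, the existence of $\tau_k$, and hence of the approximating sequence, follows at once, and the proof concludes by taking $\psi_k=\sigma_k$.
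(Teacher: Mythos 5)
The paper itself offers no proof of this statement --- it is quoted from Anick's article --- so your sketch can only be measured against Anick's (standard) argument. Your outer scheme is the right skeleton and matches that approach: normalize the linear part (which lies in $\SL(N,\mathbb{K})$ because $\J(\varphi)=1$, hence is tame), then correct one homogeneous degree at a time, observing that the degree-$k$ discrepancy $(f_1,\ldots,f_N)$ must be divergence-free because the Jacobian is identically $1$, and finish with the Cauchy/limit argument in the power series metric. (One small slip: it is not true that every elementary automorphism has unit Jacobian --- a linear change of variables has Jacobian $\Det A$ --- but this is harmless provided you build $\sigma_{k-1}$ from transvections and $\SL(N,\mathbb{K})$ maps.)

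The genuine gap is in your key lemma. You reduce the induction step to the claim that the symbols of transvections, i.e.\ the fields $g\,\partial_i$ with $g$ independent of $x_i$, linearly span the divergence-free homogeneous vector fields of each degree $k$. This is false. Already for $N=2$, $k=2$, the free symbols span only the two-dimensional space $\langle y^2\partial_x,\;x^2\partial_y\rangle$, while the divergence-free quadratic fields form a four-dimensional space; for instance $xy\,\partial_x-\tfrac12 y^2\,\partial_y$ is divergence-free but is not a combination of free symbols, since the decomposition $\sum_i g_i\partial_i$ is unique and $xy$ depends on $x$. In general the free symbols give dimension $N\binom{k+N-2}{N-2}$ against $N\binom{k+N-1}{N-1}-\binom{k+N-2}{N-1}$ for the divergence-free space, so the failure persists for all $N$ and $k\geq 1$. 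What rescues the strategy is that the tame group also contains linear automorphisms, so the leading terms you can realize include all $\GL(N,\mathbb{K})$-conjugates of free symbols, i.e.\ shear fields $f(\ell_1,\ldots,\ell_{N-1})\,\partial_v$ in arbitrary directions; the correct (and nontrivial) lemma is that \emph{these} span the divergence-free fields in each degree, and this is where characteristic zero genuinely enters (e.g.\ via the fact that powers of linear forms span each symmetric power), not merely through division by exponents in a monomial rewriting. As written, your proposal both asserts a spanning statement that is false and defers its justification (``granting this lemma''), so the central step of the approximation argument is missing and must be replaced by the conjugation-enhanced version or by Anick's original inductive construction.
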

The symplectic version of Anick's theorem is a recent development \cite{KGE}.
\begin{thm}
Let $\sigma=(\sigma(x_1),\;\ldots,\;\sigma(x_n),\;\sigma(p_1),\;\ldots,\;\sigma(p_n))$ be a symplectomorphism of $\mathbb{K}[x_1,\ldots,x_n,p_1,\ldots,p_n]$ with unit Jacobian.
Then there exists a sequence $\lbrace \tau_k\rbrace\subset \TAut P_n(\mathbb{K})$ of tame symplectomorphisms which converges to $\sigma$ in power series topology.
\end{thm}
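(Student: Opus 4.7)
The plan mirrors Anick's inductive approximation scheme, adapted so that the symplectic structure is preserved at every stage. The idea is to build the approximating sequence $\{\tau_k\}$ by successively correcting $\sigma$ one height at a time, using the fact (Lemma 2.4) that symplectic elementary transvections, together with $\Sp(2n,\mathbb{K})$-linear changes of generators, lie in $\TAut P_n$ and can be combined to realize a rich class of symplectic flows.

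First I would carry out a reduction. A constant translation is a linear symplectic map, and the Jacobian at the origin of $\sigma$ lies in $\Sp(2n,\mathbb{K})$, so composing $\sigma$ on the right with a suitable element of $\TAut P_n$ reduces the problem to the case $\sigma = \mathrm{id} + R$ with $\Ht(R) \geq 2$. This places us near the identity in the power series topology, where all subsequent analysis takes place.

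Next I would run the main induction. Assume we have produced $\tau^{(k)} \in \TAut P_n$ with $\Ht(\sigma \circ (\tau^{(k)})^{-1} - \mathrm{id}) \geq h$ and write $\sigma \circ (\tau^{(k)})^{-1} = \mathrm{id} + \Delta$. Its leading component $\Delta_h$ corresponds, by the symplectic condition and the triviality of $H^1_{\mathrm{dR}}(\mathbb{A}^{2n})$, to a Hamiltonian vector field with polynomial Hamiltonian $H$ of degree $h+1$. The inductive step is to produce a tame symplectomorphism $\rho$ whose leading correction equals the time-one flow of $H$ modulo height $h+1$; then $\tau^{(k+1)} := \rho \circ \tau^{(k)}$ has error height at least $h+1$. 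Iterating yields $\tau^{(k)} \to \sigma$ in the metric $d(\varphi,\psi) = \exp(-\Ht(\varphi - \psi))$, since $d(\sigma, \tau^{(k)}) \leq e^{-k}$.

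The principal obstacle, and the substantive content, is the construction of $\rho$. By Lemma 2.4 the only symplectic transvections available correspond to Hamiltonians depending solely on the $x$'s or solely on the $p$'s -- the \emph{Lagrangian} Hamiltonians -- yet the Hamiltonian $H$ in question is an arbitrary polynomial in both sets of variables. The saving observation is that Lagrangian Hamiltonians generate the entire polynomial Poisson algebra under iterated brackets, since $\{x^\alpha, p^\beta\}$ and its iterates yield every mixed monomial up to lower order. One therefore expresses $H$ as a combination of iterated Poisson brackets of Lagrangian Hamiltonians and realizes the decomposition at the group level via the Baker--Campbell--Hausdorff formula: the group commutator of the time-one flows of $f(x)$ and $g(p)$ agrees to leading order with the flow of $\{f,g\}$, with the discrepancy pushed into strictly higher height. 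The delicate point is a uniform bound controlling these BCH tails across iterations, so that they remain subordinate to the next inductive stage; securing such estimates is precisely what is carried out in \cite{KGE}, and once they are available the induction closes and produces the required sequence in $\TAut P_n$.
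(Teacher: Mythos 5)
A preliminary remark on the comparison: this paper does not prove Theorem 2.6 at all --- it is imported verbatim from \cite{KGE}, and the text merely points the reader there. So your proposal can only be measured against the argument of \cite{KGE}, whose skeleton is indeed the one you describe: normalize the linear symplectic part, show that the leading deviation of a near-identity symplectomorphism is a Hamiltonian vector field $X_H$ with $H$ homogeneous (your appeal to closedness plus the polynomial Poincar\'e lemma is the right mechanism), realize $X_H$ modulo higher height by a tame symplectomorphism, and iterate. Note also that in the power-series topology no ``uniform bound across iterations'' is needed: each inductive step only has to match finitely many homogeneous components, and errors of strictly larger height are simply handled at later steps.

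The genuine gap is in the realization step and in how you close it. (i) The assertion that brackets of Lagrangian Hamiltonians span each homogeneous component is true but not proved by your remark: $\lbrace x^{\alpha},p^{\beta}\rbrace$ is in general a sum of several monomials of the same degree (brackets of homogeneous elements are homogeneous, so ``up to lower order'' is not even meaningful here); one needs an actual spanning argument, e.g.\ by polarization, $\lbrace (a\cdot x)^{s},(b\cdot p)^{t}\rbrace = st\,(a\cdot b)(a\cdot x)^{s-1}(b\cdot p)^{t-1}$. (ii) The BCH/commutator device fails as stated at the first nontrivial stage: for $h=2$ the Hamiltonian $H$ is cubic, so any decomposition $H=\sum\lbrace f_i,g_i\rbrace$ forces a quadratic factor, whose flow is linear; iterated adjoint terms by a linear field do not gain height, so the claim that the discrepancy is ``pushed into strictly higher height'' breaks precisely there and the leading term of the group commutator is no longer $\lbrace f,g\rbrace$. (iii) Most seriously, you defer the ``delicate point'' to \cite{KGE}; since Theorem 2.6 \emph{is} the main result of \cite{KGE}, this is circular as a proof. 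All of this can be repaired, and more cheaply than by BCH: write the homogeneous $H$ of degree $h+1$ as a finite sum of $(h+1)$-st powers of linear forms $\ell_i$ (possible in characteristic zero), move each $\ell_i$ to a Darboux coordinate by an element of $\Sp(2n,\mathbb{K})$, and note that the time-one flow of $c\,x_1^{h+1}$ is an elementary symplectic transvection whose deviation from the identity is \emph{exactly} the corresponding Hamiltonian field; conjugating by the $\Sp$ element and composing the pieces gives $\rho\in\TAut P_n$ with $\rho=\mathrm{id}+X_H+(\text{height}\geq 2h-1\geq h+1)$, which closes the induction without any commutator estimates. One further caution: a single transvection $x_k\mapsto x_k+f$ is symplectic only when $f$ depends on the conjugate variable alone (Lemma 2.4 as printed is looser than what is true), so the flows of general Lagrangian Hamiltonians you use should be treated as compositions of elementary transvections that are not individually symplectic but whose composite is --- this is consistent with the definition of $\TAut P_n$ as an intersection, but it deserves a sentence.
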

The reader is encouraged to browse the proof of this statement in \cite{KGE} in order to gain a somewhat broader understanding of the context of Kontsevich conjecture and associated situations. 

\smallskip

In order to utilize the approximation theory, we need to be able to make sense of the \textbf{lifted limit} of a tame sequence $\lbrace \sigma_k\rbrace$. Just as the automorphisms $\psi_k$ lifted from $\sigma_k$ are defined by means of formal power series in $\hbar$ in the framework of deformation quantization (see below), so will be the lifted limit $\Psi$. However, while the tame automorphisms $\psi_k$ will have entries polynomial in $\hbar$ (which is an immediate consequence of the quantization formula) and also the coefficients at each $\hbar^n$ will be polynomial in the generators of the Poisson algebra $P_n$, it will generally not be the case for arbitrary $\sigma$. In order for the lifted limit to be well defined, one therefore needs statements on convergence of the appropriate power series: the power series in $\hbar$ with respect to the $\hbar$-adic topology as well as the power series which determine the coefficients (in the $\mathfrak{m}$-adic topology obtained from the scheme structure on $\Aut P_n$). The needed statements translate into the following theorems \cite{KGE}:

\begin{thm}
Let $\sigma$ be a symplectomorphism and let $\mathcal{O}_{\sigma}$ be the local ring of $\Aut P_n(\mathbb{C})$ with its maximal ideal $\mathfrak{m}$. Then there exists a sequence of tame symplectomorphisms $\lbrace \sigma_k\rbrace$ which converges to $\sigma$ in power series topology, such that the coordinates of $\sigma_k$ converge to coordinates of $\sigma$ in $\mathfrak{m}$-adic topology.
\end{thm}  

\begin{thm}
Suppose given a symplectomorphism $\sigma$ and $\lbrace \sigma_k\rbrace$ is a tame sequence converging to it. Then the $\hbar$-series which define the lifted tame automorphisms $\psi_k$ converge to the power series that define the limit $\Psi$ in the $\hbar$-adic topology.
\end{thm}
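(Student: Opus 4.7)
The plan is to reduce the $\hbar$-adic convergence of $\{\psi_k\}$ to a separate convergence at each fixed $\hbar$-order. Working in the Moyal realization of the Weyl algebra ($W_n = (P_n[[\hbar]],\star)$) and fixing a normal ordering (e.g.\ Weyl-symmetric), I would write
\[
\psi_k(a_i)=\sum_{m\geq 0}\psi_k^{(m)}(a_i)\,\hbar^m,\qquad \psi_k(b_j)=\sum_{m\geq 0}\psi_k^{(m)}(b_j)\,\hbar^m,
\]
so that $\psi_k^{(0)}(a_i)=\sigma_k(x_i)$ and $\psi_k^{(0)}(b_j)=\sigma_k(p_j)$. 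By definition of the $\hbar$-adic topology, convergence of $\psi_k$ is equivalent to convergence, for every fixed $m$, of the coefficient sequences $\{\psi_k^{(m)}\}$ in the $\mathfrak{m}$-adic topology on $\hat{P}_n$. Having established such limits $\Psi^{(m)}$, the series $\Psi=\sum_{m}\Psi^{(m)}\,\hbar^m$ is then the required lifted limit.

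The main step is a continuity lemma: for each $m$, the map $\sigma\mapsto \psi^{(m)}$ is continuous on tame symplectomorphisms in the power series topology. Here I would use the fact that each elementary symplectomorphism in a tame decomposition of $\sigma_k$ lifts to an $\hbar$-independent automorphism of the star-product algebra --- a linear symplectic change of generators commutes with $\star$, and a transvection $x_k\mapsto x_k+f(p)$ lifts to $a_k\mapsto a_k+f(b)$ without $\hbar$-correction, since $f(b)$ commutes with itself. All $\hbar$-dependence is then produced by the bidifferential operators $B_r$ of the Moyal product when one re-expresses the composition in the chosen normal form. A bookkeeping of these contributions shows that $\psi_k^{(m)}$ is a polynomial expression in the coordinates of $\sigma_k$ together with a \emph{bounded} collection of their partial derivatives (bounded in terms of $m$ and $n$, not in terms of the tame length of $\sigma_k$). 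Since $\Ht(\partial f)\geq \Ht(f)-1$ and heights add under multiplication, such a polynomial-in-derivatives operation is continuous in the $\mathfrak{m}$-adic topology on $\hat{P}_n$.

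Applying Theorem~2.6, which provides $\mathfrak{m}$-adic convergence of the coordinates of $\sigma_k$ to those of $\sigma$, the continuity lemma immediately yields $\psi_k^{(m)}\to \Psi^{(m)}$ in $\hat{P}_n$ for every $m$, completing the argument. The hard part is precisely the bookkeeping claim of the previous paragraph: one must rule out a possible blow-up of derivative order or monomial complexity as the tame length of $\sigma_k$ grows. This uniform control follows by combining the fact that each nontrivial contribution of $B_r$ carries an $\hbar^r$ with the observation that at a given $\hbar^m$-grading at most $m$ such nontrivial contractions can occur along the composition chain. Once this is in place, the remaining continuity argument is essentially the machinery of \cite{An} transplanted to the symplectic/Moyal setting.
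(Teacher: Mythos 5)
The paper itself does not prove this statement --- it quotes it from \cite{KGE} --- so your attempt can only be judged on its own merits, and there it has a genuine gap at exactly the step you yourself flag as ``the hard part.'' The whole content of the theorem is the continuity of the lifting at each $\hbar$-order, and your justification of the bookkeeping claim does not deliver it. It is true that at the grading $\hbar^m$ at most $m$ nontrivial bidifferential insertions $B_r$ ($r\geq 1$) occur along the composition chain, but those $B_r$'s act on the \emph{intermediate partial composites} of the elementary factors of $\sigma_k$, not on $\sigma_k$ itself. The number of elementary factors and their degrees are unbounded as $k\to\infty$, and nearby (or even equal) tame symplectomorphisms can admit wildly different factorizations; well-definedness of the lift (via the isomorphism of tame subgroups) tells you only that $\psi_k$ depends on $\sigma_k$, not that $\psi_k^{(m)}$ is a universal differential-polynomial expression in the coordinates of $\sigma_k$ of order bounded by $m$, uniformly in the tame length. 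That uniform jet-dependence is precisely the continuity statement the theorem asserts, so as written the argument is circular: the ``remaining continuity argument'' you defer to an Anick-style machinery is the theorem itself.

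There are also two topological conflations worth fixing. First, literal $\hbar$-adic convergence of $\psi_k$ to $\Psi$ would force the coefficients at each fixed power of $\hbar$ to stabilize exactly; already at order $\hbar^0$ one has $\psi_k^{(0)}=\sigma_k$, which converges to $\sigma$ only in the power series metric and is never eventually equal to it, so the statement must be read in a mixed topology ($\hbar$-adic in the deformation parameter combined with height/formal convergence of the coefficients), and your ``by definition of the $\hbar$-adic topology'' reduction should be restated accordingly rather than asserted as an equivalence. Second, the $\mathfrak{m}$ in the approximation theorem you invoke is the maximal ideal of the local ring $\mathcal{O}_{\sigma}$ of the scheme $\Aut P_n(\mathbb{C})$ at the point $\sigma$, which is not the augmentation ideal of $\hat{P}_n=\mathbb{C}[[x_1,\ldots,p_n]]$; you pass between the two topologies without comment, and the continuity lemma would have to be proved for whichever one you actually use.
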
 


\section{Lifting of polynomial symplectomorphisms}

Given an arbitrary symplectomorphism $\sigma \in\Aut P_n(\mathbb{C})$ and a sequence of tame symplectomorphisms $\lbrace \sigma_k\rbrace$ converging to it, we can construct a sequence $\lbrace \psi_k\rbrace$ of Weyl algebra automorphisms in the following way. Let $\psi_k$ be the pre-image of $\sigma_k$ under the direct homomorphism $\Aut W_n\rightarrow \Aut P_n$ described in the first section. It is an isomorphism of the tame subgroups, therefore the assignment is well defined and unique. Alternatively, we could start with the Poisson algebra $P_n=\mathbb{C}[x_1,\ldots,x_n,p_1,\ldots, p_n]$ and perform the deformation quantization \cite{Kon, Kell} according to a formal parameter $\hbar$ and the associative star product $\star$. It is straightforward to deform elementary symplectomorphisms, and the procedure yields, under appropriate identifications, the same result as the one involving the direct homomorphism. In either case, we refer to thus described procedure as the \textbf{lifting} of polynomial symplectomorphisms.

\medskip

It seems slightly more convenient to work with the deformed family $W_n(\hbar)$ of Weyl algebras depending upon $\hbar$ rather than with a single Weyl algebra. The most important thing to bear in mind, however, is the fact that a given symplectomorphism $\sigma$ will specify (by imposition of associativity and Weyl algebra commutation relations on images $\sigma(x_i)$, $\sigma(p_j)$) a new star product $\star_{\sigma}$, which differs from the original one by a gauge transformation and defines a new family of associative algebras $W_n(\hbar, \sigma)$. The main theorem then admits a reformulation in the following way:

\begin{thm}[Main theorem]
There is an algebra embedding
$$
W_n(\hbar, \sigma)\hookrightarrow W_n(\hbar).
$$
\end{thm}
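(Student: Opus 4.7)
The plan is to construct the desired embedding as a lifted limit of the Weyl-algebra automorphisms that quantize a tame approximation to $\sigma$, and then to identify the resulting algebra structure with $W_n(\hbar,\sigma)$ by a gauge argument. First I would invoke the symplectic Anick theorem to obtain a sequence $\{\sigma_k\}\subset \TAut P_n$ of tame symplectomorphisms converging to $\sigma$ in the power series topology, strengthened by the coordinate-convergence result so that the entries $\sigma_k(x_i),\;\sigma_k(p_j)$ also converge in the $\mathfrak{m}$-adic topology of the local ring $\mathcal{O}_\sigma$. Each $\sigma_k$, being tame, lifts uniquely to an automorphism $\psi_k \in \Aut W_n(\hbar)$ through the direct homomorphism of Section 1, which restricts to an isomorphism on tame subgroups; concretely, one quantizes elementary symplectomorphisms via the Moyal $\star$-product and composes. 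The entries $\psi_k(a_i),\;\psi_k(b_j)$ are then polynomial in the generators with coefficients in $\mathbb{C}[\hbar]$, and they satisfy the standard Weyl commutation relations.

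Second, the $\hbar$-adic convergence theorem guarantees that the sequences $\{\psi_k(a_i)\}$ and $\{\psi_k(b_j)\}$ converge coefficient-by-coefficient in $\hbar$ to formal series $\Psi(a_i),\;\Psi(b_j)$, whose classical limits at $\hbar = 0$ are $\sigma(x_i)$ and $\sigma(p_j)$. The commutation relations pass to the limit in the $\hbar$-adic topology, so the assignment $a_i \mapsto \Psi(a_i),\; b_j \mapsto \Psi(b_j)$ extends to an algebra homomorphism $\Phi : W_n(\hbar,\sigma) \to \widehat{W_n(\hbar)}$, where $\widehat{W_n(\hbar)}$ denotes the power-series completion of $W_n(\hbar)$ in the generators. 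Equivalently, the star product obtained by pulling $\star$ back along $\Psi$ coincides with $\star_\sigma$, which is the very definition of $W_n(\hbar,\sigma)$.

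The hard part will then be to show (a) that $\Psi(a_i),\;\Psi(b_j)$ actually lie in $W_n(\hbar)$ itself, i.e.\ that at every order in $\hbar$ they are polynomial in $a_i,\;b_j$ rather than formal power series, and (b) that $\Phi$ is injective. For (a) I would use gauge equivalence: whenever $\sigma$ is a polynomial symplectomorphism with polynomial inverse, the star products $\star$ and $\star_\sigma$ are related by a gauge transformation whose action on generators preserves polynomiality order by order in $\hbar$; since $\Psi \bmod \hbar = \sigma$ is already polynomial, an induction in $\hbar$-degree then forces all quantum corrections to remain polynomial, so $\Phi$ factors through $W_n(\hbar)$. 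For (b), injectivity follows from the observation that the semiclassical reduction of $\Phi$ is the automorphism $\sigma$ itself, so any kernel element must be divisible by arbitrary powers of $\hbar$, and $W_n(\hbar,\sigma)$ is $\hbar$-torsion free. I expect (a) to be the principal obstacle, as $\hbar$-adic convergence only yields formal power series \emph{a priori}; extracting genuine polynomiality requires the gauge identification together with the polynomial character of $\sigma$, and it is precisely at this step that the density of $\TAut P_n$ in $\Aut P_n$ is essential rather than merely convenient.
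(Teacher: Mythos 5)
Your first half matches the paper: tame approximation of $\sigma$ via the symplectic Anick theorem together with the $\mathfrak{m}$-adic and $\hbar$-adic convergence statements, lifting of each $\sigma_k$ through the isomorphism of tame subgroups (equivalently, quantization of elementary symplectomorphisms), and passage to a lifted limit $\Psi$ which defines a homomorphism into the power-series completion. That much is exactly the content of Sections 2--3 of the paper, and note that by itself it only reproduces the weaker statement (attributed there to Myung and Oh) that the completed algebras $S_n(\hbar)$ and $S_n(\hbar,\sigma)$ are isomorphic.

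The genuine gap is in your step (a), which is where the whole theorem lives. You assert that $\star$ and $\star_\sigma$ are related by a gauge transformation ``whose action on generators preserves polynomiality order by order in $\hbar$'' and that an induction on $\hbar$-degree, starting from the polynomiality of $\Psi \bmod \hbar = \sigma$, forces all corrections to be polynomial. But the existence of a gauge equivalence with that polynomiality property is precisely what has to be proved; Kontsevich's formality only gives a gauge equivalence by formal (power-series) differential operators, and a priori the lifted limit is an infinite series in $\hbar$ with coefficients that are themselves power series in the generators, so the induction has nothing to run on. The paper's mechanism for cutting the series down is entirely different and is absent from your proposal: one embeds $W_n(\mathbb{C})$ into an ultraproduct of Weyl algebras over fields of characteristic $p$, where each $W_n$ is Azumaya over its large center $\mathbb{F}_p[x_1^p,\ldots,y_n^p]$; Morita equivalence of the two matrix algebras forces the centers to be isomorphic, whence the components $\Psi_l^p$ are \emph{rational} functions of $\hbar$ (Proposition 4.1); the gauge is then realized concretely, prime by prime, as conjugation by elements $1+\hbar Q_l$ chosen to stabilize the $\hbar$-independent part of the center (Proposition 4.2) and, using the degree bound on the coefficient of $\hbar$ (the lemma bounding $(\Psi_l)_1$ by $\Deg\sigma$), to kill the higher-order terms one $\hbar$-degree at a time, the leading correction being the commutator $[\Psi_l,Q_l]\hbar$, i.e.\ a derivative of $Q_l$. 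Without this characteristic-$p$/Azumaya input (or some substitute for it) your polynomiality induction is circular. Your step (b) is not really an issue: injectivity is the easy part, and your $\hbar$-torsion-freeness argument (or simplicity of the Weyl algebra at the classical fiber) suffices. To be fair, the paper itself only outlines this gauging algorithm and defers one lemma to later work, but the ultraproduct--Azumaya reduction is the key idea your proposal is missing.
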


This is equivalent to the Kontsevich conjecture and can therefore be perceived as the principal subject of this study.

\smallskip

It is worth mentioning that Myung and Oh \cite{MyOh} have recently conducted a study of deformations of Poisson algebras with the purpose similar to that of the present paper. It can be inferred from their results that the larger algebras $S_n(\hbar)$ and $S_n(\hbar, \sigma)$ of formal power series (while $W_n(\hbar)$ consists of expressions polynomial in $\hbar$) are isomorphic to each other. The statement of the Kontsevich conjecture (provided by the Main theorem above) is stronger.

\smallskip

Whenever $\sigma$ is tame, the statement of the theorem is straightforward.
Any tame symplectomorphism $\sigma$ lifts to an object of the following form

$$
(\Psi_1(x_1,\ldots,x_n,p_1,\ldots,p_n,\hbar),\ldots,\Psi_{2n}(x_1,\ldots,x_n,p_1,\ldots,p_n,\hbar)).
$$
Here $\Psi_l(x_1,\ldots,x_n,p_1,\ldots,p_n,\hbar)$ are power series in $\hbar$ whose coefficients are polynomials in (commuting) variables $x_i$ and $p_j$. They are obtained by applying, say, the Kontsevich quantization formula \cite{Kon} (rather, a special case corresponding to the $2n$-dimensional affine space) to the polynomials $\sigma(x_1),\;\ldots,\;\sigma(p_n)$. It is clear from the fact that the coefficients at $\hbar^n$ are the images under certain bidifferential operators that the power series $\Psi_l$ are really polynomials in $\hbar$, whose degree (in $\hbar$) depends on the total degree of $\sigma$.

\smallskip

The case of general $\sigma$ cannot be processed in this way. Indeed, if $\lbrace\sigma_k\rbrace$ is a sequence of tame symplectomorphisms converging to $\sigma$, one can take the lifted automorphisms $\psi_k$ and define the lifted limit $\psi$. The lifting procedure based on the initial star product, however, when applied to the sequence $\sigma_k$, will in the limit return an object defined by power series in $\hbar$ (rather than polynomials). Moreover, the coefficients at $\hbar^n$ will also in general be power series in commuting variables $x_i$ and $p_j$ (although that particular problem can be dealt with, cf. lemma in the next section).

At this point (in accordance with the remark at the end of the previous chapter), it may be somewhat comforting to note that these coefficients in $x_i$ and $p_j$ will always be given by power series with sufficiently good behavior. In fact, they will be convergent with respect to the $\mathfrak{m}$-adic topology of the local ring $\mathcal{O}_{\sigma}$ of the scheme $\Aut P_n$ at the point $\sigma$. More precisely, we have the following theorem \cite{KGE}:
\begin{thm}
Let $\sigma$ be a symplectomorphism and let $\mathcal{O}_{\sigma}$ be the local ring of $\Aut P_n(\mathbb{C})$ with its maximal ideal $\mathfrak{m}$. Then there exists a sequence of tame symplectomorphisms $\lbrace \sigma_k\rbrace$ which converges to $\sigma$ in power series topology, such that the coordinates of $\sigma_k$ converge to coordinates of $\sigma$ in $\mathfrak{m}$-adic topology.
\end{thm}  
This result says that the power series which constitute the coefficients at $\hbar^n$ of the lifted limit are well defined (thus making the lifted limit well defined also).

\medskip
To prove the main theorem, one has to gauge the lifting in a certain way in order to cut off the infinite series and, consequently, obtain an embedding of algebras. In the next section we argue that such gauging always exists.

\section{Lifted limit as a Weyl algebra automorphism}

Let $\sigma$ be an arbitrary polynomial symplectomorphism as before, and let $\psi$ be the lifted limit of a tame symplectomorphism sequence $\lbrace\sigma_k\rbrace$. In this section we actively use the embedding of the Weyl algebra over $\mathbb{C}$ into the reduced direct product of Weyl algebras over algebraically closed fields of positive characteristic $p$, which runs over all prime numbers. To avoid the conflict of notation, we denote the generators of $P_n$ and $W_n$ by letters $x$ and $y$ rather than (more classical) $x$ and $p$.

\medskip
We first observe the following
\begin{prop}
The power series $\Psi_l(x_1,\ldots,p_n,\hbar)$ which make up the lifted limit $\psi$ correspond to rational functions in $\hbar$. Namely, for each positive characteristic $p$ in the ultraproduct decomposition, the central elements $\Psi_l^p$ are rational functions in $\hbar$.
\end{prop}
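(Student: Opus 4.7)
The plan is to exploit the large center of the Weyl algebra in positive characteristic and to combine it with the two convergence theorems (in $\mathfrak{m}$-adic and $\hbar$-adic topology) established in the previous section. The three key ingredients are: (i) centrality of $\Psi_l^p$ for each prime $p$, (ii) boundedness of the polynomial degree of $\Psi_l^p$ in the variables $x_i^p, d_j^p$, and (iii) a finite-dimensionality argument that forces the $\hbar$-dependence to be rational.

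First I would fix a prime $p$ in the ultraproduct decomposition and work in the $\overline{\mathbb{F}_p}$-factor $W_n(\hbar)\otimes \overline{\mathbb{F}_p}$. Here $x_l^p$ and $d_l^p$ are central in the classical Weyl algebra, and they remain central in $W_n(\hbar)$ after $\hbar$ is adjoined. Since $\psi$ is an automorphism, it preserves the center, so $\Psi_l^p=\psi(x_l)^p$ lies in the central subalgebra $\overline{\mathbb{F}_p}[[\hbar]][x_i^p, d_j^p]$. For each tame approximant $\sigma_k$, the lift $\psi_k$ has entries polynomial in $\hbar$ by the Kontsevich quantization formula, so $(\psi_k)_l^p$ is a polynomial in $\hbar$ whose coefficients are polynomials in $x_i^p,d_j^p$ of degree at most $p\cdot\Deg(\sigma_k)$. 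The $\mathfrak{m}$-adic convergence theorem guarantees that these coefficient polynomials stabilize, and the $\hbar$-adic convergence theorem guarantees that they assemble into a well-defined $\Psi_l^p\in \overline{\mathbb{F}_p}[[\hbar]][x_i^p,d_j^p]$. Because $\sigma$ has fixed finite polynomial degree, $\Psi_l^p$ is a polynomial in $x_i^p,d_j^p$ of bounded total degree; only the $\hbar$-coefficients are a priori infinite series.

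The main obstacle is to upgrade each such $\hbar$-coefficient from a formal power series to a rational function. My plan is to extract algebraic relations satisfied by $\Psi_l^p$ from the automorphism property of $\psi$. Writing $\Phi_l=\psi(d_l)$, the identities $[\Phi_i,\Psi_j]=\hbar\delta_{ij}$ and the mutual commutation of $\Psi_i$'s (respectively $\Phi_i$'s), upon raising to $p$-th powers and projecting onto the center, yield a finite system of polynomial equations over $\overline{\mathbb{F}_p}[\hbar]$ in the finitely many coefficients of $\Psi_l^p$ (and $\Phi_l^p$) as polynomials in $x_i^p,d_j^p$. Reduction modulo $\hbar$ recovers the known polynomial symplectomorphism $\sigma$ (Frobenius-twisted), which is a smooth point of this system; hence any formal $\hbar$-adic solution lifting this reduction is algebraic in $\hbar$ by Artin approximation (or direct Hensel lifting in the ring of algebraic power series).

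To refine algebraicity to rationality, I would observe that the potential denominators can only come from the rescaling $\hbar^{N}$ that identifies the classical central generator $x_l^p$ with its $\hbar$-deformed analogue under the quantization formula: there is no source of genuine algebraic (multivalued) behavior, because the lift is uniquely determined at each $\hbar$-level by the data of $\sigma$. Concretely, I would induct on $\hbar$-valuation, producing at each step a unique correction term in $\overline{\mathbb{F}_p}(\hbar)[x_i^p,d_j^p]$; the finiteness of the polynomial degree in $x_i^p,d_j^p$ bounds the Newton iteration to finitely many effective stages and yields $\Psi_l^p$ as a rational function in $\hbar$ with denominator a power of $\hbar$. The hardest aspect will be the bookkeeping that controls the denominators uniformly in $l$, so that the resulting expressions can subsequently be gauged to produce the honest Weyl algebra embedding needed for the Main Theorem.
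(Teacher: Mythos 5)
Your route is genuinely different from the paper's, and it has gaps that would sink it as written. The paper's proof is a one-step structural observation you never use: for each prime $p$ in the ultraproduct, the Weyl algebra in characteristic $p$ is Azumaya over its large center, i.e.\ a matrix algebra over it; the original algebra and the one produced by $\sigma$ (via the modified star product $\star_\sigma$) are then matrix algebras whose equivalence forces an isomorphism of the commutative centers $C\simeq C_\sigma$ (Morita equivalence of commutative rings is isomorphism), and rationality of the central elements $\Psi_l^p$ in $\hbar$ is read off from that isomorphism. The Azumaya property is the key idea of the proposition, and your argument does not touch it.

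Concretely, two of your steps fail. First, you assert that $\Psi_l^p$ lies in $\overline{\mathbb{F}_p}[[\hbar]][x_i^p,d_j^p]$ with total degree in $x_i^p,d_j^p$ bounded in terms of $\Deg\sigma$, on the grounds that the $\mathfrak{m}$-adic and $\hbar$-adic convergence theorems make the coefficients ``stabilize.'' They do not: the tame approximants $\sigma_k$ have unbounded degree, convergence in the $\mathfrak{m}$-adic topology gives no degree bound at a fixed power of $\hbar$, and the paper itself emphasizes that the coefficients of the lifted limit at each $\hbar^n$ are in general power series in the commuting generators --- turning them into polynomials is the content of a separate lemma whose proof is deferred to later work and which this proposition does not rely on. Without that bound, your ``finite system of polynomial equations in finitely many coefficient unknowns'' does not exist. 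Second, the passage from a formal $\hbar$-series solution to an algebraic and then rational one requires that your particular formal solution be pinned down by the commutation relations together with its reduction mod $\hbar$; it is not, because lifts are only determined up to conjugation (precisely the gauge freedom the next section of the paper is devoted to), so smoothness/uniqueness hypotheses needed for Artin approximation or Hensel lifting to apply to $\Psi_l^p$ itself are unavailable, and the final claim that ``there is no source of genuine algebraic behavior,'' which is where rationality would actually be established, is asserted rather than proved.
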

\begin{proof}
Indeed, for a given positive characteristic $p$ marking a component in the ultraproduct, the correspoding Weyl algebra is Azumaya. This in particular means that it is isomorphic to the algebra of $p\times p$ matrices over its center. The two matrix algebras -- the initial Weyl algebra and the one which results from $\sigma$ -- are then isomorphic to each other, which is equivalent to the fact the the centers $C$ and $C_{\sigma}$ of the corresponding algebras are isomorphic\footnote{The fact that Morita equivalence of commutative rings implies their isomorphism is well known.}. Therefore, $\Psi_l^p$ are rational functions.
\end{proof}

We turn now to the proof of the main theorem. Our objective is to show that our lifting can be appropriately modified so that the resulting object will be given by polynomials. Working for each $p$ in the ultraproduct decomposition, we look for a gauge of the lifting that will leave the $\hbar$-independent part of the center, given by $\mathbb{F}_p[x_1^p,\ldots,y_n^p]$, unperturbed. We have the following

\begin{prop}
The lifting can be gauged in such a way that stabilizes the $\hbar$-independent center $\mathbb{F}_p[x_1^p,\ldots,y_n^p]$.
\end{prop}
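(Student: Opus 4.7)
The plan is to exploit the gauge freedom inherent in the deformation quantization picture. In the star-product formulation, the lifted limit $\psi$ intertwines the Moyal product $\star$ on $W_n(\hbar)$ with the transported product $\star_\sigma$ associated to the symplectomorphism $\sigma$. Any equivalence of star products, i.e.\ an invertible $\mathbb{F}_p[\![\hbar]\!]$-linear map $T = \mathrm{id} + \hbar T_1 + \hbar^2 T_2 + \cdots$ relating $\star$ to an equivalent representative $\star'$, produces by composition a new lifting $T \circ \psi$ with the same defining symplectic data but with a different action on the center. I would choose $T$ so that $T\circ\psi$ stabilizes the $\hbar$-independent subalgebra $\mathbb{F}_p[x_1^p,\ldots,y_n^p]$ of the center of $W_n(\hbar)$ in each characteristic-$p$ slice of the ultraproduct.

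By the preceding proposition, for each prime $p$ the elements $\Psi_l^p = \psi(x_l^p)$ are rational functions of $\hbar$ with values in the center of $W_n(\hbar,\sigma)_p$. Modulo $\hbar$ they reduce to $\sigma(x_l)^p$, which already lies in $\mathbb{F}_p[x_1^p,\ldots,y_n^p]$ since symplectomorphisms commute with Frobenius in positive characteristic. I would construct $T$ order by order in $\hbar$: at order $\hbar^k$, the obstruction to pushing $\Psi_l^p$ to its $\hbar=0$ value is a cocycle in the first Hochschild cohomology of the center with values in itself, and this cohomology vanishes because $\mathbb{F}_p[x_1^p,\ldots,y_n^p]$ is smooth and commutative and we operate in the Azumaya setting. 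Assembling the $T_k$ produces an $\hbar$-adically convergent gauge with the stated effect on central generators.

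The main obstacle I anticipate is the coordination of the gauge across all primes: constructing $T^{(p)}$ separately in each slice is easy, but the statement only becomes useful if the $T^{(p)}$ assemble into a single object over $\mathbb{C}$ that respects the ultraproduct structure. To handle this I would rely on tame approximation: for each tame $\sigma_k$ in the approximating sequence guaranteed by the symplectic Anick theorem, the lifted automorphism $\psi_k$ is polynomial in $\hbar$ of bounded degree, and the central-stabilizing gauge can be produced explicitly and uniformly in $p$ at each finite stage. Invoking the $\mathfrak{m}$-adic and $\hbar$-adic convergence theorems of the previous section, one passes to the limit along $\{\sigma_k\}$ to obtain a single gauge $T$ that acts coherently in every component and delivers the asserted stabilization of the $\hbar$-independent center.
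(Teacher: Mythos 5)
Your skeleton (work per prime $p$ in the ultraproduct, correct the lifting order by order in $\hbar$, control the limit via tame approximation and the $\mathfrak{m}$-adic and $\hbar$-adic convergence theorems) matches the paper's outline, but the step that is supposed to do the actual work fails. You claim the obstruction at order $\hbar^k$ is a class in the first Hochschild cohomology of $\mathbb{F}_p[x_1^p,\ldots,y_n^p]$ with values in itself and that this cohomology vanishes by smoothness. It does not: by Hochschild--Kostant--Rosenberg, $HH^1$ of a smooth commutative algebra is its module of derivations, which is large; and since a commutative algebra has no nonzero inner derivations, ``the obstruction class vanishes in $HH^1$'' amounts to ``the obstruction cocycle is zero,'' so the cohomological formalism buys nothing. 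The genuine content is to show that the unwanted $\hbar$-corrections to the central generators are \emph{exact} (derivatives of something), so that a compensating element can be produced by integration. That is how the paper proceeds: it insists the gauge be an automorphism of the Azumaya algebra, i.e.\ for fixed $p$ a conjugation $\Psi_l\mapsto(1+\hbar Q_l)\Psi_l(1+\hbar Q_l)^{-1}$ with $Q_l$ rational in $\hbar$; the leading correction is $[\Psi_l,Q_l]\hbar$, of order $\hbar^2$, given near the identity by $\partial Q_l/\partial x_i$ and $\partial Q_l/\partial y_j$; the degree bound on the coefficients $(\Psi_l)_1$ (total degree less than $\Deg\sigma$) serves as the base case; and in the $n=1$ illustration $Q$ is built explicitly by taking primitives, with the higher-order existence question reduced to the vanishing of an appropriate differential form. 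Your proposal never addresses this exactness question, which is exactly where the difficulty sits.

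Two further mismatches. A general equivalence of star products $T=\mathrm{id}+\hbar T_1+\cdots$ intertwines $\star$ with a different product $\star'$, so $T\circ\psi$ is a homomorphism into the algebra with product $\star'$, not into $W_n(\hbar)$ with its fixed product; since the goal is the embedding $W_n(\hbar,\sigma)\hookrightarrow W_n(\hbar)$, the admissible gauges are automorphisms of the target, which is why the paper requires the gauging to be an Azumaya algebra automorphism (matrix conjugation in each characteristic-$p$ slice). Finally, the cross-prime coherence you try to arrange by making the finite tame stages uniform in $p$ is not how the paper handles it: there the gauge is itself an ultraproduct object, a class of elements $Q_{l,p}$ rational in $\hbar$ taken modulo the ultrafilter, so no uniformity in $p$ beyond the ultrafilter identification is needed.
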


Another important lemma is the following.

\begin{lem}
There is a transformation of the lifting that results in the lifted automorphism $\Psi$ being defined by power series in $\hbar$ which have coefficients polynomial in $x_i$ and $y_j$.
\end{lem}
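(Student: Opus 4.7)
The overall strategy is to combine the preceding Proposition on gauging with the Azumaya structure of the Weyl algebra in positive characteristic, in order to remove, order by order in $\hbar$, any non-polynomial $x,y$-behaviour of the coefficients of $\Psi_l$.

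First, I would work componentwise in the ultraproduct $\mathbb{C}\simeq \prod'_p \mathbb{F}_p$ recalled in the Introduction. For each prime $p$ I fix the gauge supplied by the preceding proposition, so that the $\hbar$-independent centre $\mathbb{F}_p[x_1^p,\ldots,y_n^p]$ is stabilised. Expand
\[
\Psi_l \;=\; \sum_{m\geq 0} c_{l,m}(x,y)\,\hbar^m;
\]
by the $\mathfrak{m}$-adic convergence theorem of Section~2 each coefficient $c_{l,m}$ initially lives in the $\mathfrak{m}$-adic completion of $P_n$, i.e.\ is a priori a formal power series in the $x_i,y_j$.

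Next, I would argue by induction on $m$ that the gauge can be refined so that every $c_{l,m}$ is a polynomial. The base case is immediate: $c_{l,0}=\sigma(x_l)$ (resp.\ $\sigma(y_l)$), which is polynomial by hypothesis on $\sigma$. For the inductive step, the deformed commutation relations $[\Psi_i,\Psi_j]_{\star}=\hbar\,\omega_{ij}$, read at order $\hbar^{m+1}$, yield a linear equation for $c_{l,m}$ whose inhomogeneous part is polynomial by the inductive hypothesis and whose solution space is exactly the $\hbar^{m}$-level centre of the star-product algebra. The preceding proposition constrains this centre to reduce to the polynomial subring $\mathbb{F}_p[x_1^p,\ldots,y_n^p]$, providing enough residual gauge freedom to subtract off any non-polynomial piece of $c_{l,m}$ generated by the passage to the lifted limit.

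The main obstacle, and the step I expect to demand the most care, is assembling the per-$p$ gauges into a single gauge transformation over $\mathbb{C}$ and ensuring that the degrees of the resulting $c_{l,m}$ remain uniformly bounded on an ultrafilter-large set, so that the assembled ultraproduct of polynomials is itself polynomial rather than a formal series of unbounded degree. For this I would exploit the symplectic Anick theorem together with the simultaneous $\mathfrak{m}$-adic and $\hbar$-adic convergence established in Section~2: the tame approximants $\sigma_k$ have coordinates of controlled degree, this degree control propagates through the lifting to bounds on the $\hbar$-coefficients of each $\psi_k$, and the convergence theorems preserve these bounds under the limit. The conclusion over $\mathbb{C}$ then follows by the Lefschetz-type reduction principle recalled in the Introduction.
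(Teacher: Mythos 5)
The paper does not actually prove this lemma: immediately after its statement the authors write that ``the proof of this statement will be addressed in our further work,'' so there is no argument in the text to compare yours against. Judged on its own terms, your proposal has two genuine gaps. First, the mechanism in your inductive step is backwards. Gauge transformations of the lifting are conjugations of the form $(1+\hbar Q_l)\Psi_l(1+\hbar Q_l)^{-1}$, so at each order in $\hbar$ they can only modify a coefficient $c_{l,m}$ by elements in the image of the adjoint action (commutators with the $\Psi_l$, which at leading order are Poisson brackets / derivatives); they can never shift a coefficient by a central element, since conjugation by something central is trivial. Thus if the ambiguity in $c_{l,m}$ left by the deformed commutation relations is parametrized by the centre, as you assert, that is precisely the part of $c_{l,m}$ that gauging \emph{cannot} touch, and ``residual gauge freedom to subtract off any non-polynomial piece'' is not available in the way you claim. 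You would need a separate argument that the central, $\hbar$-dependent discrepancy is itself polynomial (this is what the paper's stabilization of the $\hbar$-independent centre $\mathbb{F}_p[x_1^p,\ldots,y_n^p]$ and the rationality-in-$\hbar$ proposition are aimed at), not an appeal to gauge freedom.

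Second, the step you yourself flag as the main obstacle is asserted rather than proved. The convergence statements of Section~2 are convergence in the power-series, $\mathfrak{m}$-adic and $\hbar$-adic topologies; none of them bounds the total degree in $x_i,y_j$ of the coefficients of the approximants $\psi_k$ uniformly in $k$, and a limit (or an ultraproduct over primes) of polynomials of unbounded degree is exactly how a genuine formal power series arises. If ``degree control propagates through the lifting and is preserved under the limit'' were available as stated, the lemma would be immediate and much of the difficulty of the main theorem would evaporate; establishing such a bound (or circumventing it via the per-$p$ Azumaya/conjugation analysis) is the actual content that the paper postpones, and your proposal does not supply it.
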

The proof of this statement will be addressed in our further work. 

\smallskip

We next observe that the gauging must obviously be performed by means of an Azumaya algebra automorphism. In terms of matrix algebra, that means that (for fixed $p$) the desired transform must be a matrix conjugation. For $l=1,\ldots, 2n$, we therefore consider the expressions of the form
$$
(1+\hbar Q_l)\Psi_l(1+\hbar Q_l)^{-1},
$$
where $Q_l$ is a rational function in $\hbar$. Note that $Q_l$ is, generally, not an element of the Weyl algebra over $\mathbb{C}$ (in the standard sense), but rather a class (modulo ultrafilter $\mathcal{U}$ that realizes the ultraproduct decomposition) of elements $Q_{l,p}$ which are rational in $\hbar$.

\smallskip

\smallskip

Any such $Q_l$ leads to a gauging. 

We use the following result.

\begin{lem}
The polynomials $(\Psi_l)_1(x_1,\ldots, y_n)$, $l=1,\ldots, 2n$ which give the coefficients of $\hbar$-power series $\Psi_l$ at $\hbar$ are of total degree less than $\Deg \sigma$. 
\end{lem}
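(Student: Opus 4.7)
The plan is to combine the degree-dropping behavior of the Moyal star product with the tame approximation provided by Theorems 2.2--2.4. I would first establish the bound in the tame case and then transfer it to arbitrary $\sigma$ by a limiting argument.

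For a tame symplectomorphism $\tau$, the lift $\psi_{\tau}$ is built by composing the Weyl-algebra lifts of the elementary pieces of $\tau$. Each individual elementary (symplectic linear map or symplectic transvection) lifts without any $\hbar$-correction; $\hbar^{1}$-terms therefore appear only through the composition steps, which are governed by the Moyal expansion
\begin{equation*}
f \star g = fg + \tfrac{\hbar}{2}\{f,g\} + \sum_{k \geq 2} \tfrac{\hbar^{k}}{k!\,2^{k}}\, P^{k}(f,g),
\end{equation*}
in which each application of the Poisson-type bidifferential operator $P$ lowers the total polynomial degree by $2$. An induction on the length of the tame decomposition of $\tau$ then yields $\Deg(\Psi_l)_1 \leq \Deg \tau - 2$, which is in particular strictly less than $\Deg\tau$.

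For arbitrary $\sigma \in \Aut P_n(\mathbb{C})$, I would choose a tame approximating sequence $\{\sigma_k\}$ converging to $\sigma$ in the power series topology (Theorem 2.2), together with the corresponding lifts $\psi_k$. Theorem 2.4 guarantees $\hbar$-adic convergence of the $\hbar$-series of the $\psi_k$ to those of the lifted limit $\Psi$, while Theorem 2.3 supplies the $\mathfrak{m}$-adic convergence of coefficients. After applying the polynomial-coefficient transformation from the preceding Lemma, $(\Psi_l)_1$ becomes a polynomial in $x_i, y_j$ realized as the appropriate limit of the polynomial $\hbar^{1}$-coefficients of $\psi_k$.

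The main obstacle lies in controlling degrees through this limit: since the tame degrees $\Deg\sigma_k$ need not remain bounded, the pointwise estimate $\Deg(\Psi^k_l)_1 \leq \Deg\sigma_k - 2$ is by itself insufficient. The remedy is to exploit the fact that convergence in the power series topology is convergence of low-degree homogeneous components: for every fixed $d$, the pieces of $\sigma_k$ and $\sigma$ of total degree at most $d$ coincide once $k$ is large enough. The tame analysis shows that the degree-$d$ part of $(\Psi^k_l)_1$ is computed, via a single application of the Poisson-type bidifferential, from the degree-$(\leq d+2)$ components of $\sigma_k$; hence the degree-$d$ part of the limit $(\Psi_l)_1$ is determined in the same way from the degree-$(\leq d+2)$ part of $\sigma$. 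For $d > \Deg\sigma - 2$, that data is zero, so $\Deg(\Psi_l)_1 \leq \Deg\sigma - 2 < \Deg\sigma$, completing the argument.
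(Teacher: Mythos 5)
The paper itself never proves this lemma: like the lemma preceding it, its justification is deferred to further work, so you are not being measured against an argument in the text. Judged on its own terms, your proposal has two genuine gaps. The first is in the tame case. Your induction on the length of a tame decomposition only yields what Moyal degree counting gives at each composition step: the $\hbar^{1}$-coefficient of the lift of $\epsilon_1\circ\cdots\circ\epsilon_m$ is bounded in terms of the degrees of the elementary factors (roughly their product, minus $2$). But $\Deg\tau$ for $\tau=\epsilon_1\circ\cdots\circ\epsilon_m$ can be drastically smaller than those intermediate degrees, because the classical composition cancels; to conclude $\Deg(\Psi_l)_1\le\Deg\tau-2$ you must show that these classical cancellations are inherited by the $\hbar^{1}$-terms, and nothing in your argument addresses this -- it is essentially the difficulty the lemma is about. (A cleaner route for tame $\tau$: lifts of elementary symplectomorphisms commute with the anti-involution fixing the generators, reversing products and sending $\hbar\mapsto-\hbar$, so the Weyl symbols of the images of generators under a tame lift are even in $\hbar$ and the $\hbar^{1}$-coefficients vanish outright.)

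The second and more serious gap is the limit step. The pivotal claim -- that the degree-$d$ part of $(\Psi^k_l)_1$ is computed ``via a single application of the Poisson-type bidifferential'' from the degree-$(\le d+2)$ components of $\sigma_k$ -- is never established, and it is inconsistent with your own tame analysis, in which $\hbar$-terms arise from iterated star-compositions of all the elementary factors rather than from one bidifferential operator applied to the coordinates of $\sigma_k$. If such a closed formula for $(\Psi_l)_1$ in terms of $\sigma$ were available, the lemma would follow at once with no approximation argument, so this step assumes what is to be proved. Moreover, Theorems 2.2--2.4 provide convergence in the power-series, $\mathfrak{m}$-adic and $\hbar$-adic topologies, all of which control only low-degree (low-order) data; none of them can give an upper bound on the total degree of a limit, which is exactly what the lemma asserts. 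What your scheme actually requires is a degree-locality statement for the lifting itself -- for instance, that if two symplectomorphisms agree in all homogeneous components of degree $\le d+2$, then the $\hbar^{1}$-coefficients of their lifts agree in degrees $\le d$ -- proved rather than asserted; combined with a corrected tame case this would close the argument. Note also a small slip at the end: for $d>\Deg\sigma-2$ the degree-$(\le d+2)$ part of $\sigma$ is all of $\sigma$, not zero; what must vanish is the homogeneous input of degree exactly $d+2$, and that again presupposes the missing formula.
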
 

This is not immediately obvious in light of remarks on the modified star product $\star_{\sigma}$ (although it is apparent for lifted tame symplectomorphisms). This statement provides the base case for the processing of the higher-order terms. 

\smallskip

The gauging acts upon the term at $\hbar^2$. Indeed, the leading term of the difference between the gauged and ungauged expression is given by the commutator (in the initial star product)
$$
[\Psi_l, Q_l]\hbar,
$$
which is of order $\hbar^2$ by definition of $\Psi_l$ and $Q_l$.

For $\sigma$ that are close enough to the identity symplectomorphism, this translates into correction terms of the form
$$
\frac{\partial Q_l}{\partial x_i}\;\;\text{and}\;\;\frac{\partial Q_l}{\partial y_j}.
$$
This can always be fulfilled.

Now, in order to tweak the higher-degree (in $\hbar$) terms, one applies consecutive conjugations according to the method described above. For the terms of degree higher than $\Deg \sigma$, the existence of compensating terms amounts to the vanishing of the appropriate differential form.

\medskip

We illustrate the algorithm by applying it to the base case $n=1$, so that the coefficients that need to be processed are power series in two generators $x$ and $y$. Firstly, we can always find a conjugation such that the resulting lifted limit will send $x$ to itself. Once this is done, the power series
$$
1+\hbar Q
$$
can only depend on $x$ ($Q$ corresponds to $y$). Conjugating the image of $y$ under $\Psi$ by $1+\hbar Q$, we can dispose of the terms that do not contain $y$. Indeed, the leading term of conjugation with $y$ is given by differentiation, so the amending term is constructed by taking a primitive of the appropriate polynomial.

On the other hand, because of the fact that the commutator $[y,x]=\hbar$ produces a power of $\hbar$ and shifts the terms one notch, no extra terms in the fixed term $\hbar^k$ (with which we are currently working) can appear. This completes the proof. The algorithm can be easily modified for the case of arbitrary $n$.

\section{Acknowledgments}
The work is supported by the Russian Science Foundation grant No. 17-11-01377.

\end{document}